\newtheorem{thm}{Theorem}[section]
\newtheorem{lem}[thm]{Lemma}
\numberwithin{equation}{section}
\theoremstyle{definition}
\newcommand\numberthis{\addtocounter{equation}{1}\tag{\theequation}}
\newcommand{\x}{{\tt x}}
\begin{document}

\title[a Type of Permutation Trinomials]{Determination of a Type of Permutation Trinomials over Finite Fields, II}

\author[Xiang-dong Hou]{Xiang-dong Hou}
\address{Department of Mathematics and Statistics,
University of South Florida, Tampa, FL 33620}
\email{xhou@usf.edu}
\thanks{* Research partially supported by NSA Grant H98230-12-1-0245.}

\keywords{finite field, permutation polynomial}

\subjclass[2000]{11T06, 11T55}

\begin{abstract}
Let $q$ be a prime power. We determine all permutation trinomials of $\Bbb F_{q^2}$ of the form $a\x+b\x^q+\x^{2q-1}\in\Bbb F_{q^2}[\x]$. The subclass of such permutation trinomials of $\Bbb F_{q^2}$ with $a,b\in\Bbb F_q$ was determined in a recent paper \cite{Hou-a}.
\end{abstract}

\maketitle


\section{Introduction}

For a prime power $q$, let $\Bbb F_q$ denote the finite field with $q$ elements. A polynomial $f\in\Bbb F_q[\x]$ is called a {\em permutation polynomial} (PP) of $\Bbb F_q$ if it induces a permutation of $\Bbb F_q$. In a recent paper \cite{Hou-a}, all permutation trinomials of $\Bbb F_{q^2}$ of the form $f=a\x+b\x^q+\x^{2q-1}\in\Bbb F_q[\x]$ were determined. (Note the assumption that $a,b\in\Bbb F_q$.) In fact, \cite{Hou-a} gives explicit conditions on $a,b\in\Bbb F_q$ that are necessary and sufficient for $f$ to be a PP of $\Bbb F_{q^2}$. The motivation for studying this type of trinomials was to solve a related problem about another class of PPs of finite fields; we refer to the reader to \cite{FHL,Hou-a} for the details.

In the present paper, we consider the same type of trinomials $f=a\x+b\x^q+\x^{2q-1}$ but with $a,b\in\Bbb F_{q^2}$. We find explicit conditions on $a$ and $b$ that are necessary and sufficient for $f$ to be a PP of $\Bbb F_{q^2}$. The results appear in Section 2 as Theorems A and B, which cover the odd $q$ and even $q$ cases separately. Thus we have a complete determination of permutation trinomials of $\Bbb F_{q^2}$ of the type $a\x+b\x^q+\x^{2q-1}$.

The proofs of Theorems A and B are given in Sections 3 and 4, respectively. The basic strategy and the general plan for the proofs in the present paper are the same as those of \cite{Hou-a}. The necessity of the conditions is obtained through the computation of the power sums $\sum_{x\in\Bbb F_{q^2}}f(x)^s$ for suitable $s$; the sufficiency of the conditions is proved by establishing the uniqueness of the solution $x$ in $\Bbb F_{q^2}$ of the equation $f(x)=y$ with $y\in\Bbb F_{q^2}$. However, certain parts of the proofs in \cite{Hou-a} do not work without the assumption that $a,b\in\Bbb F_q$. The proofs of the present paper rely on some new techniques which allow us to overcome the difficulties that cannot be resolved by simple adaptations of the method of \cite{Hou-a}. 
For example, a $p$-adic method is used to capture the information of the discriminant of a polynomial lost in characteristic $2$. Similar to the situation in \cite{Hou-a},
the proofs in the present paper are very much dependent on computations. A highlight of the computations is equations \eqref{3.19} and \eqref{3.20} where mysterious factorizations are found with computer assistance. For the computations that are similar to those of \cite{Hou-b,Hou-a}, we will be brief and we refer the reader to \cite{Hou-b,Hou-a} for the details.

There are numerous classes of permutation trinomials over finite fields in the literature, some of which have profound connections to other areas. We refer the reader to a recent survey \cite{Hou-Fq11} and the references therein. To the author's best knowledge, the results of the present paper are the only instance where a (nontrivial) type of permutation trinomials is completely determined without additional assumptions on the coefficients. It is our hope that the solution of the problem considered here can be a step stone leading to the determination of similar types of permutation trinomials. The trinomial $a\x+b\x^q+\x^{2q-1}$ can be expressed as $\x h(\x^{q-1})$, where $h=a+b\x+\x^2$. More generally, PPs of $\Bbb F_q$ of the form $\x^r h(\x^{(q-1)/d})$, where $r>0$ and $d\mid q-1$, have been the focus of several studies \cite{Akb-Wan07, Akb-Ghi-Wan11, Mar11, Par-Lee01,Wan-Lid91,Wan07,Zie08,Zie09,Zie-arXiv}. It is also our hope that the present paper can provide some insight for future work on this type of PPs.

Through out the paper, the letters {\tt x,z} denote indeterminates. The algebraic closure of a field $F$ is denoted by $\overline F$.

During the preparation of the present paper, the author was informed by M.~Zieve that he found several classes of permutation trinomials of $\Bbb F_{q^2}$ of the form $a\x+b\x^q+\x^{2q-1}$ \cite{Zie-pre}. His approach, which is entirely different from ours, employs linear fractional functions over $\Bbb F_{q^2}$
that map $\mu_{q+1}=\{x\in\Bbb F_{q^2}:x^{q+1}=1\}$ to either $\mu_{q+1}$ or $\Bbb F_q\cup\{\infty\}$. It is a pleasant surprise that the five classes of permutations trinomials in \cite[Theorem~1.1]{Zie-pre}, after a suitable parameterization, comprise precisely the PPs in Theorems~A and B of the present paper. Therefore, \cite{Zie-pre} has provided an alternate proof of the sufficiency of the conditions in Theorems~A and B.


\section{Theorems A and B}

The results of the paper are the following two theorems which completely determine the permutation trinomials of $\Bbb F_{q^2}$ of the form $a\x+b\x^q+\x^{2q-1}$.

\medskip
\noindent{\bf Theorem A.} {\em Let $f=a{\tt x}+b{\tt x}^q+{\tt x}^{2q-1}\in\Bbb F_{q^2}[{\tt x}]$, where $q$ is odd. Then $f$ is a PP of $\Bbb F_{q^2}$ if and only if one of the following is satisfied.

\begin{itemize}
  \item [(i)] $a=b=0$, $q\equiv 1,3\pmod 6$. 
  \item [(ii)] $(-a)^{\frac{q+1}2}=-1$ or $3$, $b=0$. 
  \item [(iii)] $ab\ne 0$, $a=b^{1-q}$, $1-\frac{4a}{b^2}$ is a square of $\Bbb F_q^*$.
  \item [(iv)] $ab(a-b^{1-q})\ne 0$, $1-\frac{4a}{b^2}$ is a square of $\Bbb F_q^*$, $b^2-a^2b^{q-1}-3a=0$.
\end{itemize} 
}

\noindent{\bf Theorem B.} {\em Let $f=a{\tt x}+b{\tt x}^q+{\tt x}^{2q-1}\in\Bbb F_{q^2}[{\tt x}]$, where $q$ is even. Then $f$ is a PP of $\Bbb F_{q^2}$ if and only if one of the following is satisfied.

\begin{itemize}
  \item [(i)] $a=b=0$, $q=2^{2k}$.
  \item [(ii)] $ab\ne 0$, $a=b^{1-q}$, $\text{\rm Tr}_{q/2}(b^{-1-q})=0$.
  \item [(iii)] $ab(a-b^{1-q})\ne 0$, $\frac a{b^2}\in\Bbb F_q$, $\text{\rm Tr}_{q/2}(\frac a{b^2})=0$, $b^2+a^2b^{q-1}+a=0$.
\end{itemize} 
}

The proofs of Theorems~ A and B, to be given in Sections 3 and 4 separately, have parallel structures. Moreover, for a considerable portion of both proofs, the arguments and computations are valid for all $q$ (even or odd); the conclusions derived in one proof will be allowed to be used in the other.
     

\section{Proof of Theorem A}

\subsection{The case $\boldsymbol{ab(a-b^{1-q})=0}$}\

We first prove Theorem A under the assumption $ab(a-b^{1-q})=0$.

\medskip
{\bf Case 1.} Assume that $a=b=0$. Then $f=\x^{2q-1}$, which is a PP of $\Bbb F_{q^2}$ if and only if $\text{gcd}(2q-1,q^2-1)=1$, i.e., $q\equiv 1,3\pmod 6$.

\medskip
{\bf Case 2.} Assume that $a\ne 0$, $b=0$. 

First assume that $f$ is a PP of $\Bbb F_{q^2}$ and we show that $(-a)^{\frac{q+1}2}=-1$ or $3$. Let $s=\alpha+\beta q\le q^2-2$, where $\alpha,\beta\ge 0$, $\alpha+\beta=q-1$. By \eqref{3.22},
\begin{equation}\label{3.0}
\sum_{x\in\Bbb F_{q^2}}f(x)^s=-a^s\sum_{2i-2j-\alpha=-q,1}\binom\alpha i\binom{q-1-\alpha}ja^{-i-qj}.
\end{equation}
Since $f$ is a PP of $\Bbb F_{q^2}$, by Hermite's criterion, 
\begin{equation}\label{3.1}
\sum_{2i-2j-\alpha=-q,1}\binom\alpha i\binom{q-1-\alpha}ja^{-i-qj}=0,\qquad 0\le \alpha\le q-1.
\end{equation}
Letting $\alpha=1$ in \eqref{3.1}, we have
\begin{align*}
0&=\sum_{-2j=-q+1}\binom{q-2}ja^{-qj}+\sum_{-2j=-q-1,0}\binom{q-2}ja^{-1-qj}\cr
&=\binom{-2}{\frac{q-1}2}a^{-q\frac{q-1}2}+\binom{-2}{\frac{q+1}2}a^{-1-q\frac{q+1}2}+a^{-1}\cr
&=(-1)^{\frac{q-1}2}\frac{q+1}2 a^{\frac{-q^2+q}2}+(-1)^{\frac{q+1}2}\frac{q+3}2 a^{-1-\frac{q^2+q}2}+a^{-1}\cr
&=\frac 12a^{-\frac{q+3}2}\bigl[(-1)^{\frac{q-1}2}a^{\frac{-q^2+1}2}(a^{q+1}-3)+2a^{\frac{q+1}2}\bigr].
\end{align*}
Put $\epsilon=(-1)^{\frac{q-1}2}a^{\frac{-q^2+1}2}\in\{\pm 1\}$. Then we have
\[
a^{q+1}+2\epsilon a^{\frac{q+1}2}-3=0,
\]
which gives $a^{\frac{q+1}2}=\epsilon$ or $-3\epsilon$. Therefore $\epsilon=(-1)^{\frac{q-1}2}a^{\frac{q+1}2(1-q)}=(-1)^{\frac{q-1}2}$. Hence we have $(-a)^{\frac{q+1}2}=-1$ or $3$.

Now assume that $(-1)^{\frac{q+1}2}=-1$ or $3$. To prove that $f$ is a PP of $\Bbb F_{q^2}$, by Hermite's criterion, it suffices to show that $\sum_{x\in\Bbb F_{q^2}}f(x)^s=0$ for all $1\le s\le q^2-2$ and that $0$ is the only root of $f$ in $\Bbb F_{q^2}$. If $s\not\equiv 0\pmod{q-1}$, we have $\sum_{x\in\Bbb F_{q^2}}f(x)^s=0$ by \eqref{3.21}. Assume $s\equiv 0\pmod{q-1}$, i.e., $s=\alpha+\beta q$, $\alpha,\beta\ge 0$, $\alpha+\beta=q-1$. By \eqref{3.0}, $\sum_{x\in\Bbb F_{q^2}}f(x)^s=0$ unless $\alpha$ is odd. So we also assume that $\alpha$ is odd. Put $t=(-a)^{\frac{q+1}2}$. If $t=-1$, then $a^{q+1}=1$. Thus by \eqref{3.0}
\begin{align*}
-a^{-s}\sum_{x\in\Bbb F_{q^2}}f(x)^s&=\sum_{2i-2j-\alpha=-q,1}\binom\alpha i\binom{q-1-\alpha}ja^{-i+j}\kern 1.5cm (a^{-q}=a)\cr
&=\sum_{2(\alpha-i)-2j-\alpha=-q,1}\binom\alpha i\binom{q-1-\alpha}ja^{-(\alpha-i)+j}\cr
&=a^{-\alpha}\sum_{i+j=\frac{\alpha-1}2,\frac{\alpha+q}2}\binom\alpha i\binom{q-1-\alpha}ja^{i+j}\cr
&=a^{-\alpha}\biggl[a^{\frac{\alpha-1}2}\binom{q-1}{\frac{\alpha-1}2}+a^{\frac{\alpha+q}2}\binom{q-1}{\frac{\alpha+q}2}\biggr]\cr
&=a^{-\frac{\alpha+1}2}\bigl[(-1)^{\frac{\alpha-1}2}+a^{\frac{q+1}2}(-1)^{\frac{\alpha+q}2}\bigr]\cr
&=a^{-\frac{\alpha+1}2}\bigl[(-1)^{\frac{\alpha-1}2}-(-1)^{\frac{q+1}2}(-1)^{\frac{\alpha+q}2}\bigr]\cr
&=0.
\end{align*}
If we only assume that $t\in\Bbb F_q$, the above calculation (from the third line) becomes
\begin{align*}
&-a^{-s}\sum_{x\in\Bbb F_{q^2}}f(x)^s\cr
=\,&\sum_{i+j=\frac{\alpha-1}2,\frac{\alpha+q}2}\binom\alpha i\binom{q-1-\alpha}ja^{-\alpha+i-qj}\cr
=\,&\sum_{i+j=\frac{\alpha-1}2,\frac{\alpha+q}2}\binom\alpha i\binom{j+\alpha}\alpha(-1)^ja^{-\alpha+i-qj}\cr
=\,&\sum_i\binom\alpha i\binom{\frac{3\alpha-1}2-i}\alpha(-1)^{\frac{\alpha-1}2-i}a^{-\alpha-\frac{\alpha-1}2q+(q+1)i}\cr
&+\sum_i\binom\alpha i\binom{\frac{3\alpha-1}2-i+\frac{q+1}2}\alpha(-1)^{\frac{\alpha+q}2-i}a^{-\alpha-\frac{\alpha-1}2q-\frac{q+1}2q+(q+1)i}\cr
=\,&\sum_i\binom\alpha i\binom{\frac{3\alpha-1}2-i}\alpha(-1)^{\frac{\alpha-1}2}(-1)^ia^{-\alpha-\frac{\alpha-1}2q-\frac{q+1}2}a^{\frac{q+1}2(2i+1)}\cr
&+\sum_i\binom\alpha i\binom{\frac{3\alpha-1}2-i+\frac{q+1}2}\alpha(-1)^{\frac{\alpha+q}2}(-1)^ia^{-\alpha-\frac{\alpha-1}2q-\frac{q+1}2}a^{\frac{q+1}2 2i}\cr
=\,&(-1)^{\frac{\alpha+q}2}a^{-\alpha-\frac{\alpha-1}2-\frac{q+1}2}\cr
&\cdot\biggl[\sum_i\binom\alpha i\binom{\frac{3\alpha-1}2-i}\alpha(-1)^it^{2i+1}\
+\sum_i\binom\alpha i\binom{\frac{3\alpha-1}2-i+\frac{q+1}2}\alpha(-1)^it^{2i}\biggr].
\end{align*}
When $t=3$, the above sum equals $0$ by \cite[(3.3)]{Hou13}.

\medskip
It remains to show that $0$ is the only root of $f$ in $\Bbb F_{q^2}$. Assume to the contrary that there exists $x\in\Bbb F_{q^2}^*$ such that $f(x)=0$. Then $a=-x^{2(q-1)}$ and hence $(-a)^{\frac{q+1}2}=1\ne-1,3$, which is a contradiction.

\medskip
{\bf Case 3.} Assume that $a=0$, $b\ne 0$. 
In \eqref{3.22} let $\alpha=q-1$ and $\beta=0$. We have 
\[
\sum_{x\in\Bbb F_{q^2}}f(x)^s=-\sum_{k-1=0}\binom{q-1}kb^{q-1-k}=b^{q-2}\ne 0.
\]
So $f$ is not a PP of $\Bbb F_{q^2}$.

\medskip
{\bf Case 4.} Assume that $ab\ne 0$, $a=b^{1-q}$.
In this case, we show that $f$ is a PP of $\Bbb F_{q^2}$ if and only if $1-\frac{4a}{b^2}=1-4b^{-(q+1)}$ is a square of $\Bbb F_q^*$.

First assume that $1-4b^{-(q+1)}$ is a square of $\Bbb F_q^*$. 
To prove that $f$ is a PP of $\Bbb F_{q^2}$, we assume that $f(x)=y$, $x,y\in\Bbb F_{q^2}$, and we show that $x$ uniquely determined by $y$. First consider $y\ne 0$. Note that $y=
b^{1-q}x+bx^q+x^{2q-1}$.
Put $t=b^{-1}xy=b^{-q}x^2+x^{q+1}+b^{-1}x^{2q}\in\Bbb F_q$. Since $x=t\frac by$, $y=f(t\frac by)=tf(\frac by)$.
Thus $t$ is uniquely determined by $y$ and hence $x$ is uniquely determined by $y$. Now consider $y=0$. We claim that $x=0$. Assume to the contrary that there exists $x\ne 0$. Then
\begin{equation}\label{3.3}
x^{2(q-1)}+bx^{q-1}+b^{1-q}=0.
\end{equation}
Thus
\begin{equation}\label{3.4}
x^{q-1}=b\cdot\frac 12\Bigl(-1+\sqrt{1-4b^{-(q+1)}}\,\Bigr),
\end{equation}
where $\sqrt{1-4b^{-(q+1)}}\in\Bbb F_q$. Raising both sides of \eqref{3.4} to the power $q-1$ gives $x^{2(1-q)}=b^{q-1}$. Thus $x^{q-1}=\pm b^{\frac{1-q}2}$ and \eqref{3.3} becomes $2b^{1-q}\pm b^{\frac{3-q}2}=0$, i.e., $b^{\frac{1+q}2}=\mp 2$. Then $1-4b^{-(q+1)}=1-4\cdot\frac 14=0$, which is a contradiction.

Next assume that $f$ is a PP of $\Bbb F_{q^2}$. Put $z=-\frac{a^q}{b^{2q}}$. Letting $\alpha=0$ and $\beta=q-1$ in \eqref{3.22}, we have 
\begin{align*}
0&=\sum_{x\in\Bbb F_{q^2}}f(x)^{(q-1)q}\cr
&=-a^{1-q}\sum_{-j-l=-q}\binom{q-1}j\binom jl a^{-qj}b^{q(j-l)}\cr
&=-a^{1-q}\sum_{1\le l\le \frac q2}\binom{q-1}{q-l}\binom{q-l}la^{-q(q-l)}b^{q(2-2l)}\cr
&=-a^{-q}b^{2q}\sum_{1\le l\le\frac q2}(-1)^{q-l}\binom{-l}la^{ql}b^{-2ql} \numberthis \label{3.4.1}\\
&=-z^{-1}\sum_{1\le l\le\frac q2}\binom{-l}lz^l\cr
&=-z^{-1}\biggl[\,\sum_{0\le l\le\frac q2}\binom{-l}lz^l-1\,\biggr]\cr
&=-z^{-1}\frac 12\bigl[(1+4z)^{\frac{q-1}2}-1\bigr]\kern1cm \text{(by Lemma~\ref{L3.1})}.
\end{align*}
Hence $1+4z$ is a square of $\Bbb F_q^*$. Note that since $z\in\Bbb F_q$, we have $z=-\frac a{b^2}$.

This concludes the proof of Theorem~A under the assumption $ab(a-b^{1-q})=0$.

\medskip
The following lemma, used in the above proof, is a generalization of \cite[Lemma~5.1]{Hou-b} in odd characteristics.  

\begin{lem}\label{L3.1}
Let $q$ be an odd prime power. Then in $\Bbb F_p[\x]$, where $p=\text{\rm char}\,\Bbb F_q$, we have
\begin{equation}\label{3.5}
\sum_{0\le l\le \frac q2}\binom{-l}l\x^l=\frac 12\bigl[1+(1+4\x)^{\frac{q-1}2}\bigr].
\end{equation}
\end{lem}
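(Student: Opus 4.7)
The plan is to reduce both sides of \eqref{3.5} to a common expansion in $\mathbb{F}_p[\x]$, using only elementary binomial identities together with the fact that $q \equiv 0 \pmod{p}$.

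First I would massage the left-hand side. For $l\ge 1$, the identity $\binom{2l-1}{l}=\binom{2l-1}{l-1}$ gives $\binom{2l}{l}=2\binom{2l-1}{l}$, and from the definition
\[
\binom{-l}{l}=\frac{(-l)(-l-1)\cdots(-2l+1)}{l!}=(-1)^l\binom{2l-1}{l}=\frac{(-1)^l}{2}\binom{2l}{l}.
\]
Since $q$ is odd, the summation bound $l\le q/2$ is the same as $l\le(q-1)/2$, and splitting off the $l=0$ term I would get
\[
\sum_{0\le l\le q/2}\binom{-l}{l}\x^l=1+\frac12\sum_{l=1}^{(q-1)/2}(-1)^l\binom{2l}{l}\x^l
=\frac12+\frac12\sum_{l=0}^{(q-1)/2}(-1)^l\binom{2l}{l}\x^l.
\]

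Next I would expand the right-hand side directly. Since $q=p^e$ with $p$ odd, $(q-1)/2$ is a nonnegative integer, and the binomial theorem gives
\[
(1+4\x)^{(q-1)/2}=\sum_{l=0}^{(q-1)/2}\binom{(q-1)/2}{l}4^l\,\x^l.
\]
The main computation is then to check that the $l$-th coefficient here equals $(-1)^l\binom{2l}{l}$ in $\mathbb{F}_p$. Writing
\[
\binom{(q-1)/2}{l}4^l=\frac{2^l(q-1)(q-3)\cdots(q-2l+1)}{l!}
\]
and using $q\equiv 0\pmod p$ in each factor of the numerator, this reduces modulo $p$ to
\[
\frac{2^l(-1)^l\,1\cdot 3\cdots(2l-1)}{l!}=\frac{(-1)^l(2l)!}{(l!)^2}=(-1)^l\binom{2l}{l}.
\]

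Combining the two expansions immediately yields \eqref{3.5}. The only nontrivial step is the middle congruence $\binom{(q-1)/2}{l}4^l\equiv(-1)^l\binom{2l}{l}\pmod p$, and even this is standard; there should be no real obstacle beyond bookkeeping of signs and powers of $2$.
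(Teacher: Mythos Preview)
Your proof is correct and follows essentially the same strategy as the paper: compare the coefficient of $\x^l$ on both sides, reducing $\binom{(q-1)/2}{l}4^l$ modulo $p$ via $q\equiv 0$. The only cosmetic difference is that you route both sides through the intermediate form $(-1)^l\binom{2l}{l}$, whereas the paper writes $\binom{(q-1)/2}{l}=\binom{-1/2}{l}$ and manipulates the product $2^{l-1}(-1)(-3)\cdots(-2l+1)$ directly into $\binom{-l}{l}$; the underlying identity is the same.
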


\begin{proof} For $1\le l\le \frac{q-1}2$, the coefficient of $\x^l$ in the right side of \eqref{3.5} equals (in $\Bbb F_p$)
\begin{align*}
&2^{2l-1}\binom{\frac{q-1}2}l=2^{2l-1}\binom{-\frac 12}l=2^{2l-1}\frac{(-\frac 12)(-\frac 32)\cdots(-\frac{2l-1}2)}{l!}\cr
&=\frac{2^{l-1}(-1)(-3)\cdots(-2l+1)}{l!}=\frac{(-l)(-l-1)\cdots(-2l+1)}{l!}=\binom{-l}l.
\end{align*}
(In the above, we used the formula $2^{l-1}(-1)(-3)\cdots(-2l+1)=(-l)(-l-1)\cdots$ $(-2l+1)$, which is easily proved by induction.)
\end{proof}

\noindent{\bf Remark.} Lemma~\ref{L3.1} can also be derived from Lemma~\ref{4.1}.

\subsection{The case $\boldsymbol{ab(a-b^{1-q})\ne 0}$, sufficiency}\

Assume that $ab(a-b^{1-q})\ne 0$, $1-\frac{4a}{b^2}$ is a square of $\Bbb F_q^*$, and $b^2-a^2b^{q-1}-3a=0$. We show that $f$ is a PP of $\Bbb F_{q^2}$. (Recall that $q$ is odd.)

\medskip
$1^\circ$ We claim that $a+b+1\ne 0$. Otherwise, $1-\frac{4a}{b^2}=(\frac 2b+1)^2$, which is assumed to be a square of $\Bbb F_q^*$. Thus $b\in\Bbb F_q$ and from $b^2-a^2b^{1-q}-3a=0$, we derive that $a=1=b^{1-q}$, which is a contradiction.

\medskip
$2^\circ$ We claim that for $x\in\Bbb F_{q^2}$, $f(x)\in(a+b+1)\Bbb F_q$ implies $x\in\Bbb F_q$. Assume to the contrary that there exists $x\in\Bbb F_{q^2}\setminus\Bbb F_q$ such that $f(x)/(a+b+1)\in\Bbb F_q$. Then $[f(x)/(a+b+1)]^q=f(x)/(a+b+1)$, i.e., 
\[
(a+b+1)(ax+bx^q+x^{2q-1})^q=(a+b+1)^q(ax+bx^q+x^{2q-1}),
\]
which is equivalent to
\[
\bigl[(a+b+1)^qx^{q-1}+(b+1)^{q+1}-a^{q+1}+(a+b+1)x^{1-q}\bigr](x^q-x)=0.
\]
Since $x^q-x\ne 0$, we have
\begin{equation}\label{3.6}
(a+b+1)^qx^{2(q-1)}+\bigl[(b+1)^{q+1}-a^{q+1}\bigr]x^{q-1}+(a+b+1)=0.
\end{equation}
Let
\begin{equation}\label{3.6.1}
\begin{split}
\Delta\,&=\bigl[(b+1)^{q+1}-a^{q+1}\bigr]^2-4(a+b+1)^{q+1}\cr
&=\bigl[(b^q+1)(b+1)-a^{q+1}\bigr]^2-4(a^q+b^q+1)(a+b+1).
\end{split}
\end{equation}
Since $b^{q-1}=a^{-2}b^2-3a^{-1}$ and $a^{q-1}=b^{2(q-1)}=(a^{-2}b^2-2a^{-1})^2$, we have
\begin{equation}\label{3.6.2}
\begin{split}
\Delta=\,&\bigl[(a^{-2}b^3-3a^{-1}b+1)(b+1)-a^2(a^{-2}b^2-3a^{-1})^2\bigr]^2\cr
&-4\bigl[a(a^{-2}b^2-3a^{-1})^2+a^{-2}b^3-3a^{-1}b+1\bigr](a+b+1)\cr
=\,&\frac{(b^2-4a)(-3a+a^2+ab+b^2)^2}{a^4}\kern1cm\text{(factorization found by computer)}\cr
=\,&\Bigl(1-\frac{4a}{b^2}\Bigr)\Bigl[\frac b{a^2}(-3a+a^2+ab+b^2)\Bigr]^2\cr
=\,&\Bigl(1-\frac{4a}{b^2}\Bigr)\Bigl(b+b\,\frac{b^2-3a}{a^2}+\frac{b^2}a\Bigr)^2\cr
=\,&\Bigl(1-\frac{4a}{b^2}\Bigr)\Bigl(b+b^q+\frac{b^2}a\Bigr)^2,
\end{split}
\end{equation}
which is a square of $\Bbb F_q$. Thus by \eqref{3.6}, $(a+b+1)^qx^{q-1}\in\Bbb F_q$. Therefore
\[
1=\bigl[(a+b+1)^qx^{q-1}\bigr]^{q-1}=(a+b+1)^{1-q}x^{2(1-q)},
\]
and hence
\begin{equation}\label{3.7}
x^{q-1}=\pm(a+b+1)^{\frac{1-q}2}.
\end{equation}
Combining \eqref{3.6} and \eqref{3.7} gives
\[
\pm\bigl[(b+1)^{q-1}-a^{q+1}\bigr](a+b+1)^{\frac{1-q}2}=-2(a+b+1).
\]
Squaring both sides of the above equation, we have
\[
\bigl[(b+1)^{q+1}-a^{q+1}\bigr]^2=4(a+b+1)^{q+1},
\]
which implies $\Delta=0$ by \eqref{3.6.1}. Now the fourth line of \eqref{3.6.2} gives $-3a+a^2+ab+b^2=0$, that is,

\begin{equation}\label{3.8}
-3+a+b+\frac{b^2}a=0.
\end{equation}
Thus $a+b\in\Bbb F_q$. Since $x\notin\Bbb F_q$, we have $x^{q-1}=-1$ by \eqref{3.7}. Now \eqref{3.6} becomes
\begin{equation}\label{3.9}
(b^q+1)(b+1)-a^{q+1}-2(a+b+1)=0.
\end{equation}
Since $\Delta=0$, \eqref{3.6.2} gives
\begin{equation}\label{3.10}
b^q=-b-\frac{b^2}a=-\frac{(a+b)b}a.
\end{equation}
Also,
\begin{equation}\label{3.11}
a^{q+1}=a^2b^{2(q-1)}=a^2b^{-2}\Bigl[-\frac{(a+b)b}a\Bigr]^2=(a+b)^2.
\end{equation}
Thus
\begin{align*}
\text{LHS of \eqref{3.9}}\,&=\Bigl(-b-\frac{b^2}a+1\Bigr)(b+1)-(a+b)^2-2(a+b+1)\cr
&=(a+b+1)\Bigl(-\frac{b^2}a-a-b-1\Bigr),
\end{align*}
and hence
\begin{equation}\label{3.12}
\frac{b^2}a+a+b+1=0.
\end{equation}
Combining \eqref{3.8} and \eqref{3.12} gives $-3=1$, which is a contradiction.

\medskip
$3^\circ$ We now prove that $f$ is a PP of $\Bbb F_{q^2}$. We may assume $b\in\Bbb F_{q^2}\setminus\Bbb F_q$. (If $b\in\Bbb F_q$, then $a\in\Bbb F_q$ and we are done by \cite[Theorem~A]{Hou-a}.) 
Write 
\[
b^2=c+db,\quad c,d\in\Bbb F_q\quad (b+b^q=d,\ b^{q+1}=-c),
\]
and
\[
\frac a{b^2}=e\in\Bbb F_q,\quad\text{i.e.,}\ a=eb^2.
\]
Then $1-4e$ is a square of $\Bbb F_q^*$. We have 
\[
0=b^2-a^2b^{q-1}-3a=b^2-e^2b^{q+3}-3eb^2=b^2(1+e^2c-3e).
\]
Thus $1+e^2c-3e=0$, i.e., 
\begin{equation}\label{3.13}
c=3e^{-1}-e^{-2}.
\end{equation}

Let $z,w\in\Bbb F_{q^2}$ satisfy
\begin{equation}\label{3.14}
f(z)=w.
\end{equation}
We show that $z$ is uniquely determined by $w$. If $w\in(a+b+1)\Bbb F_q$, by $2^\circ$, $z\in\Bbb F_q$. Then \eqref{3.14} gives $z=w/(a+b+1)$. So we assume $w\notin(a+b+1)\Bbb F_q$; it follows that $z\notin\Bbb F_q$. Eq.~\eqref{3.14} is equivalent to 
\begin{equation}\label{3.15}
az^2+bz^{q+1}+z^{2q}=zw.
\end{equation}
Write
\[
\begin{cases}
w=u+vb,\cr
z=x+yb,
\end{cases}
\]
where $u,x\in\Bbb F_q$, $v,y\in\Bbb F_q^*$. We have
\begin{align*}
&az^2+bz^{q+1}+z^{2q}\cr
=\,&e(c+db)(x+yb)^2+b(x+yb)(x+yb^q)+(x+yb^q)^2\cr
=\,&e(c+db)(x+yb)^2+b(x^2+dxy-cy^2)+\bigl[x+y(d-b)\bigr]^2\cr
=\,&\cdots\quad\text{(using the relation $b^2=c+db$)}\cr
=\,&(ce+1)x^2+(2cde+2d)xy+(c^2e+cd^2e+c+d^2)y^2\cr
&+\bigl[(de+1)x^2+(2ce+2d^2e+d-2)xy+(2cde+d^3e-c-d)y^2\bigr]b.
\end{align*}
Also,
\[
zw=(x+yb)(u+vb)=ux+cvy+\bigl[vx+(u+dv)y\bigr]b.
\]
Thus \eqref{3.15} is equivalent to
\begin{equation}\label{3.16}
\begin{cases}
(ce+1)x^2+(2cde+2d)xy+(c^2e+cd^2e+c+d^2)y^2=ux+cvy,\cr
(de+1)x^2+(2ce+2d^2e+d-2)xy+(2cde+d^3e-c-d)y^2=vx+(u+dv)y.
\end{cases}
\end{equation}
From \eqref{3.16} we have
\begin{equation}\label{3.17}
\begin{split}
&\bigl[ (ce+1)x^2+(2cde+2d)xy+(c^2e+cd^2e+c+d^2)y^2\bigr]\bigl[vx+(u+dv)y\bigr]\cr
&-\bigl[(de+1)x^2+(2ce+2d^2e+d-2)xy+(2cde+d^3e-c-d)y^2\bigr](ux+cvy)=0.
\end{split}
\end{equation}
Put $s=x/y$. Then \eqref{3.17} can be written as 
\[
g(s)=0,
\]
where
\begin{equation}\label{3.18}
\begin{split}
g(\x)=\,&(-u-deu+v+cev)\x^3\cr
&+(3u-du-ceu-2d^2eu-cv+3dv+2cdev)\x^2\cr
&+(cu+3du-d^3eu+3cv-cdv+3d^2v-c^2ev+cd^2ev)\x\cr
&+cu+d^2u+c^2eu+cd^2eu+c^2v+2cdv+d^3v-c^2dev\in\Bbb F_q[\x].
\end{split}
\end{equation}
It suffices to show that $g$ has at most one root in $\Bbb F_q$. (If $f(z_1)=f(z_2)$, where $z_1=x_1+y_1b$, $z_2=x_2+y_2b$, $x_1,x_2\in\Bbb F_q$, $y_1,y_2\in\Bbb F_q^*$, then the uniqueness of the root of $g$ in $\Bbb F_q$ implies $\frac{x_1}{y_1}=\frac{x_2}{y_2}$, that is, $z_2=tz_1$ for some $t\in\Bbb F_q^*$. Then $f(z_1)=f(z_2)=f(tz_1)=tf(z_1)$ and hence $t=1$.)

We claim that $-u-deu+v+cev\ne 0$. Otherwise, we have 
\begin{equation}\label{3.18.1}
\begin{split}
&w(a+b+1)^q\cr
=\,&(u+vb)(a^q+b^q+1)\cr
=\,&(u+vb)(eb^{2q}+b^q+1)\cr
=\,&(u+vb)\bigl[e(d-b)^2+d-b+1\bigr]\cr
=\,&u+du+ceu+d^2eu-cv-cdev+(-u-deu+v+cev)b\in\Bbb F_q.
\end{split}
\end{equation}
Then by $2^\circ$, $z\in\Bbb F_q$, which is contrary to our assumption.

Using the relation $c=3e^{-1}-e^{-2}$ (\eqref{3.13}), we find (with computer assistance) that the discriminant of $g$ is given by
\begin{equation}\label{3.19}
D(g)=\frac{(4c+d^2)(1-4e)}{(u+dev-v-cev)^4e^8}\Theta^2,
\end{equation}
where
\begin{equation}\label{3.21.0}
\begin{split}
\Theta=\,&(-2e^2+9e^3-3de^3+9de^4+d^3e^5)u^2\cr
&+(-4e+24e^2-2de^2v-36e^3+9de^3+2d^2e^3-6d^2e^4)uv\cr
&+(-2+15e+de-27e^2-6de^2+9de^3)v^2.
\end{split}
\end{equation}
Moreover, writing $g'=a_2\x^2+a_1\x+a_0$, we have
\begin{equation}\label{3.20}
a_1^2-4a_2a_0=\frac1{e^4}(1+de)\Theta\qquad\text{(another surprise)}.
\end{equation}
In \eqref{3.19}, $1-4e$ is a square of $\Bbb F_q^*$ and $4c+d^2$ is a nonsquare of $\Bbb F_q$ (otherwise $b\in\Bbb F_q$). So if $\Theta\ne0$, then $D(g)$ is a nonsquare of $\Bbb F_q$. It follows that $g$ has at most one root in $\Bbb F_q$.

If $\Theta=0$, then $D(g)=0$. First assume $3\nmid q$. Then $\deg\,g'=2$ and $D(g')=0$. Hence $g'=3\epsilon(\x-r)^2$, where $\epsilon=-u-deu+v+cev\in\Bbb F_q^*$ and $r\in\overline{\Bbb F}_q$. Since $D(g)=0$, we have $\text{gcd}(g,g')\ne 1$, which forces $g=\epsilon(\x-r)^3$. So $g$ has at most one root in $\Bbb F_q$. Next assume $3\mid q$. Then $g'=0\cdot\x^2+a_1\x+a_0$, $a_0,a_1\in\Bbb F_q$. Since $0=a_1^2-4\cdot 0\cdot a_0=a_1^2$, we have $g'=a_0$. Since $D(g)=0$, $g$ has a multiple $r$. Then $(\x-r)^2\mid g$ and hence $\x-r\mid g'$. Since $g'$ is a constant, $g'=0$. It follows that $g=\epsilon\x^3+\gamma$ ($\gamma\in\Bbb F_q$), which has at most one root in $\Bbb F_q$.

This completes the proof of the sufficiency part of Theorem~A under the assumption $ab(a-b^{1-q})\ne 0$.

\subsection{The case $\boldsymbol{ab(a-b^{1-q})\ne0}$, necessity}\

Let $s=\alpha+\beta q>0$, where $0\le \alpha,\beta\le q-1$. We have
\begin{equation}\label{3.21}
\begin{split}
&\sum_{x\in\Bbb F_{q^2}}f(x)^s\cr
=\,&\sum_{x\in\Bbb F_{q^2}^*}x^s(a+bx^{q-1}+x^{2(q-1)})^{\alpha+\beta q}\cr
=\,&\sum_{x\in\Bbb F_{q^2}^*}x^s(a+bx^{q-1}+x^{2(q-1)})^\alpha(a^q+b^qx^{1-q}+x^{2(1-q)})^\beta\cr
=\,&\sum_{x\in\Bbb F_{q^2}^*}x^s\sum_{i,j,k,l}\binom\alpha i\binom ika^{\alpha-i}(bx^{q-1})^{i-k}(x^{2(q-1)})^k \cr
&\cdot\binom\beta j\binom jl(a^q)^{\beta-j}(b^qx^{1-q})^{j-l}(x^{2(1-q)})^l\cr
=\,&\sum_{i,j,k,l}\binom\alpha i\binom ik\binom\beta j\binom jl a^{\alpha-i+q(\beta-j)}b^{i-k+q(j-l)}\sum_{x\in\Bbb F_{q^2}^*}x^{s+(q-1)(i+k-j-l)}.
\end{split}
\end{equation}
Assume $\alpha+\beta=q-1$. Then \eqref{3.21} becomes
\begin{equation}\label{3.22}
\begin{split}
\sum_{x\in\Bbb F_{q^2}}f(x)^s\,&=\sum_{i,j,k,l}\binom\alpha i\binom ik\binom\beta j\binom jl a^{s-i-qj}b^{i-k+q(j-l)}\sum_{x\in\Bbb F_{q^2}^*}x^{(q-1)(-\alpha-1+i-j-l)}\cr
&=-\sum_{i+k-j-\alpha-1\equiv 0\,(\text{mod}\,q+1)}\binom\alpha i\binom ik\binom\beta j\binom jl a^{s-i-qj}b^{i-k+q(j-l)}\cr
&=-\sum_{i+k-j-\alpha-1=-(q+1),0}\binom\alpha i\binom ik\binom\beta j\binom jl a^{s-i-qj}b^{i-k+q(j-l)}.
\end{split}
\end{equation}
The following lemma holds for all $q$ (even or odd). Eq.~\eqref{3.23} is used here for the proof of Theorem~A; Eq.~\eqref{3.24} will be used in the proof of Theorem~B.

\begin{lem}\label{L3.2}
Assume that $a,b\in\Bbb F_{q^2}^*$ are such that $\x^2+\x+\frac a{b^2}$ has two distinct roots in $\Bbb F_q$. Then
\begin{equation}\label{3.23}
\sum_{x\in\Bbb F_{q^2}}f(x)^{1+(q-2)q}=\frac{2(b^{1-q}-a)(b^2-a^2b^{q-1}-3a)}{a^2(b^2-4a)},
\end{equation}
and for $q>2$,
\begin{equation}\label{3.24}
\begin{split}
&\sum_{x\in\Bbb F_{q^2}}f(x)^{2+(q-3)q}\cr
=\,&\frac{3b^{8-7q}(b^{1-q}-a)(b^2-a^2b^{q-1}-3a)(9a-2b^2+a^3b^{2q-2}-6a^2b^{q-1}+ab^{q+1})}{a^4(b^2-4a)^2}.
\end{split}
\end{equation} 
\end{lem}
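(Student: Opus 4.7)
The strategy is to specialize equation \eqref{3.22} to $s = 1 + (q-2)q$ (so $\alpha = 1$, $\beta = q-2$) for \eqref{3.23}, and to $s = 2 + (q-3)q$ (so $\alpha = 2$, $\beta = q-3$, which forces $q > 2$) for \eqref{3.24}; in both cases $\alpha + \beta = q-1$, so \eqref{3.22} is directly applicable. One then enumerates the quadruples $(i,k,j,l)$ contributing to the sum and evaluates each contribution in closed form, using the hypothesis that $\x^2+\x+a/b^2$ splits in $\Bbb F_q$ (equivalently, $a/b^2 \in \Bbb F_q$ and $1-4a/b^2$ is a nonzero square of $\Bbb F_q$) to collapse all $q$-th powers through the identity $a^q = a b^{2(q-1)}$.

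For \eqref{3.23}, the index condition in \eqref{3.22} splits into the equality case $i+k-j-l-2 = 0$ and the ``big'' case $i+k-j-l-2 = -(q+1)$. The equality case selects only the tuple $(i,k,j,l) = (1,1,0,0)$ and contributes a single explicit monomial. The big case divides according to $(i,k) \in \{(0,0),(1,0),(1,1)\}$; in each sub-family $j+l$ is fixed, and for fixed $(i,k)$ the inner sum, after eliminating $j$ via the index condition and using $\binom{q-2}{j} \equiv (-1)^j \binom{j+1}{1}$-type reductions modulo $p$, takes the shape $\sum_{l} \binom{-l}{l} z^{l}$ (up to an explicit prefactor), where $z$ is a monomial in $a,b$. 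Lemma~\ref{L3.1} collapses this to $\tfrac12\bigl[1 + (1+4z)^{(q-1)/2}\bigr]$; once $a/b^2 \in \Bbb F_q$ is used the argument $1+4z$ lies in $\Bbb F_q$, and the assumption that $1-4a/b^2$ is a nonzero square of $\Bbb F_q$ pins down the Legendre-symbol value $(1+4z)^{(q-1)/2}$. Adding the contributions and simplifying yields the stated closed form, in which the factor $b^2 - 4a = b^2(1 - 4a/b^2)$ in the denominator enters from clearing the radical $\sqrt{1-4a/b^2}$ that is introduced by the quadratic.

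The proof of \eqref{3.24} follows the same blueprint with $\alpha = 2$ and the enlarged index set $(i,k) \in \{(0,0),(1,0),(1,1),(2,0),(2,1),(2,2)\}$, which produces correspondingly more sub-families in the big case; the inner $l$-sums are again evaluated by Lemma~\ref{L3.1}. The main obstacle is the combinatorial bookkeeping and the recognition of the final factorization: the compact expressions on the right sides of \eqref{3.23} and \eqref{3.24} are far from evident from the raw sums, and computer assistance (in the spirit of \eqref{3.19}--\eqref{3.20}) is essentially required to confirm them. The sufficiency proof of the previous subsection offers an independent consistency check: under the conditions of cases (iii) and (iv) of Theorem~A, at least one of the factors $b^{1-q} - a$ and $b^2 - a^2 b^{q-1} - 3a$ vanishes, so the right sides of both \eqref{3.23} and \eqref{3.24} are zero, as Hermite's criterion requires.
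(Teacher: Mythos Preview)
Your overall plan---specialize \eqref{3.22} to $(\alpha,\beta)=(1,q-2)$ and $(2,q-3)$, split according to the two residues of $i+k-j-l-\alpha-1$ modulo $q+1$, and then sum over $l$ with $j$ eliminated---is exactly the paper's plan. The gap is in the claim that each sub-family ``takes the shape $\sum_l\binom{-l}{l}z^l$ (up to an explicit prefactor)'' and can therefore be closed by Lemma~\ref{L3.1} alone. That is not what happens. After the reduction $\binom{q-2}{j}\equiv(-1)^j(j+1)$ (and $\binom{q-3}{j}\equiv(-1)^j\binom{j+2}{2}$ for $\alpha=2$), the surviving $l$-sums carry an extra polynomial weight in $l$ and a shifted lower index: they are of the form
\[
\sum_{0\le l\le\frac{q-u}2}\binom{l+v}{v}\binom{-l}{l+u}z^l,\qquad 0\le u,v\le 1\ \text{(resp.\ }0\le u,v\le 2\text{)},
\]
with $z=-a/b^2$. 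For \eqref{3.23} the expansion (see \eqref{3.25}) already involves the four distinct sums $\sum\binom{-l}{l}z^l$, $\sum(l+1)\binom{-l}{l}z^l$, $\sum\binom{-l}{l+1}z^l$, $\sum(l+1)\binom{-l}{l+1}z^l$; for \eqref{3.24} two further sums with the weight $\binom{l+2}{2}$ enter. Lemma~\ref{L3.1} handles only the first of these. The paper closes the computation by invoking the six evaluations \eqref{3.27}--\eqref{3.32} (from \cite{Hou-b,Hou-a}), which give each of these sums as an explicit rational function of $z$ under the splitting hypothesis; substituting them into \eqref{3.25}--\eqref{3.26} and simplifying yields \eqref{3.23}--\eqref{3.24}.

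A second, smaller issue: Lemma~\ref{L3.1} is stated only for odd $q$, whereas Lemma~\ref{L3.2} is asserted (and later used in the proof of Theorem~B) for all $q$. The identities \eqref{3.27}--\eqref{3.32} are characteristic-free under the hypothesis that $\x^2+\x+a/b^2$ splits in $\Bbb F_q$; a route through $(1+4z)^{(q-1)/2}$ would need a separate argument in characteristic~$2$.
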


\begin{proof}
The computations for \eqref{3.23} and \eqref{3.24} are similar to those in \cite[\S3.3 and Appendix~A]{Hou-a}. Therefore we only give a sketch here. 

Put $z=-a/b^2\in\Bbb F_q$. Letting $\alpha=1$ and $\beta=q-2$ in \eqref{3.22}, we have
\begin{align*}
\sum_{x\in\Bbb F_{q^2}}f(x)^{1+(q-2)q}=-a^{1+(q-2)q}\biggl[&\sum_{-j-l-1=-q}\binom{q-2}j\binom jl a^{-qj}b^{q(j-l)}\cr
&+\sum_{-j-l=-q}\binom{q-2}j\binom jl a^{-1-qj}b^{1+q(j-l)}\cr
&+\sum_{-j-l+1=-q,1}\binom{q-2}j\binom jl a^{-1-qj}b^{q(j-l)}\biggr].
\end{align*}
The right side of the above can be expressed in terms of
\[
\sum_{0\le l\le\frac{q-u}2}\binom{l+v}v\binom{-l}{l+u}z^l,
\]
where $u,v\ge 0$ are small integers. (In this case $0\le u,v\le 1$.) 
We find that 
\begin{equation}\label{3.25}
\begin{split}
\sum_{x\in\Bbb F_{q^2}}f(x)^{1+(q-2)q}=\,&-(a^{q-1}b^{1-q}+a^{-2}b^{1-q})\sum_{0\le l\le\frac{q-1}2}(l+1)\binom{-l}{l+1}z^l\cr
&+2a^{-2}b^{1-q}\sum_{0\le l\le\frac{q-1}2}\binom{-l}{l+1}z^l-a^{-2}b^2\sum_{0\le l\le\frac q2}(l+1)\binom{-l}lz^l\cr
&+2a^{-2}b^2\sum_{0\le l\le\frac q2}\binom{-l}lz^l-a^{-2}b^2-a^{-1}.
\end{split}
\end{equation}
(Cf. \cite[\S3.3]{Hou-a} for the details.) Letting $\alpha=2$ and $\beta=q-3$ ($q>2$) in \eqref{3.22}, in the same way we arrive at
\begin{equation}\label{3.26}
\begin{split}
&\sum_{x\in\Bbb F_{q^2}}f(x)^{2+(q-3)q}=-a^{3-3q}\cr
&\cdot\biggl[(-b^{2q-1}+4a^{-1}b^q-2a^{-2}b-a^{-3}b^3-a^{-4}b^{3-2q})\sum_{0\le l\le\frac q2}\binom{l+2}2\binom{-l}lz^l\cr
&+(2b^{2q-1}-8a^{-1}b^q+6a^{-2}b+3a^{-3}b^3-a^{-4}b^{3-2q})\sum_{0\le l\le\frac q2}(l+1)\binom{-l}l z^l\cr
&+(-b^{2q-1}+4a^{-1}b^q-6a^{-2}b-3a^{-3}b^3+5a^{-4}b^{3-2q})\sum_{0\le l\le\frac q2}\binom{-l}l z^l\cr
&-2a^{-3}b^{2-q}\sum_{0\le l\le\frac {q-1}2}\binom{l+2}2\binom{-l}{l+1} z^l \cr
&+(6a^{-3}b^{2-q}-3a^{-4}b^{3-2q})\sum_{0\le l\le \frac{q-1}2}(l+1)\binom{-l}{l+1} z^l\cr
&+(-6a^{-3}b^{2-q}+6a^{-4}b^{3-2q})\sum_{0\le l\le\frac{q-1}2}\binom{-l}{l+1} z^l\cr
&+4a^{-2}b+a^{-3}b^3-3a^{-3}b^{2-q}-3a^{-4}b^{3-2q}\biggr].
\end{split}
\end{equation}
(Cf. \cite[Appendix~A]{Hou-a} for the details.) It is known that
\begin{align}
&\sum_{0\le l\le\frac q2}\binom{-l}l z^l=1 &\text{(\cite[Lemma~5.1]{Hou-b})},\label{3.27} \\ 
&\sum_{0\le l\le\frac q2}(l+1)\binom{-l}l z^l=\frac{1+3z}{1+4z}&\text{(\cite[Lemma~3.2]{Hou-a})},\label{3.28} \\
&\sum_{0\le l\le\frac q2}\binom{l+2}2\binom{-l}lz^l=\frac{1+6z+11z^2}{(1+4z)^2},\quad q>2&\text{(\cite[Lemma~4.2]{Hou-a})},\label{3.29} \\
&\sum_{0\le l\le\frac{q-1}2}\binom{-l}{l+1} z^l=1&\text{(\cite[Lemma~3.3]{Hou-a})},\label{3.30} \\
&\sum_{0\le l\le \frac{q-1}2}(l+1)\binom{-l}{l+1} z^l=\frac{2z}{1+4z}&\text{(\cite[Lemma~3.3]{Hou-a})},\label{3.31} \\
&\sum_{0\le l\le\frac {q-1}2}\binom{l+2}2\binom{-l}{l+1} z^l =\frac{3z(1+2z)}{(1+4z)^2},\quad q>2 &\text{(\cite[Lemma~A.1]{Hou-a})}.\label{3.32}
\end{align}
Making the substitutions \eqref{3.27} -- \eqref{3.32} in \eqref{3.25} and \eqref{3.26} produces \eqref{3.23} and \eqref{3.24}; again, confer \cite[\S3.3 and Appendix~A]{Hou-a} for the details.
\end{proof}

Now assume that $ab(a-b^{1-q})\ne 0$ and $f$ is a PP of $\Bbb F_{q^2}$. We show that $1-\frac{4a}{b^2}$ is a nonsquare of $\Bbb F_q^*$ and $b^2-a^2b^{q-1}-3a=0$. That $1-\frac{4a}{b^2}$ is a square of $\Bbb F_q^*$ follows from \eqref{3.4.1}. Then $\x^2+\x+\frac a{b^2}$ has two distinct roots in $\Bbb F_q$. It follows from \eqref{3.23} that $b^2-a^2b^{q-1}-3a=0$.


\section{Proof of Theorem~B}


\subsection{The case $\boldsymbol{ab(a-b^{1-q})=0}$}\

We first prove Theorem~B under the assumption $ab(a-b^{1-q})=0$. 

\medskip
{\bf Case 1.} Assume that $a=b=0$. Then $f=\x^{2q-1}$, which is a PP of $\Bbb F_{q^2}$ if and only if $q=2^{2k}$.

\medskip
{\bf Case 2.} Assume that $a\ne 0$, $b=0$. By \eqref{3.0} with $s=0+(q-1)q$, we have
\[
\sum_{x\in\Bbb F_{q^2}}f(x)^{(q-1)q}=a^s\sum_{-2j=-q}\binom{q-1}ja^{-qj}=a^s\binom{q-1}{\frac q2}a^{-\frac{q^2}2}=a^{\frac{q^2}2-q}\ne 0.
\]
Therefore $f$ is not a PP of $\Bbb F_{q^2}$.

\medskip
{\bf Case 3.} Assume that $a=0$, $b\ne 0$. Let $\alpha=q-1$ and $\beta=0$ in \eqref{3.22}, we have 
\[
\sum_{x\in\Bbb F_{q^2}}f(x)^{q-1}=\sum_{k-1=0}\binom{q-1}kb^{q-1-k}=b^{q-2}\ne 0.
\]
So $f$ is not a PP of $\Bbb F_{q^2}$.

\medskip
{\bf Case 4.} Assume that $ab\ne 0$, $a=b^{1-q}$. Then $\frac a{b^2}=\frac{b^{1-q}}{b^2}=b^{-1-q}$. We show that $f$ is a PP of $\Bbb F_{q^2}$ if and only if $\text{Tr}_{q/2}(b^{-1-q})=0$. The ``only if'' part is proved in Section~4.3 after Lemma~\ref{L4.1}. Now assume $\text{Tr}_{q/2}(b^{-1-q})=0$. Let $x,y\in\Bbb F_{q^2}$ satisfying $f(x)=y$. We show that $x$ is uniquely determined by $y$.

First assume $y\ne 0$. We have $y=f(x)=b^{1-q}x+bx^q+x^{2q-1}$. Let $t=b^{-1}xy=b^{-q}x^2+x^{q+1}+b^{-1}x^{2q}\in\Bbb F_q$. Since $x=t\frac by$, we have $tf(\frac by)=y$. Hence $t$ is uniquely determined by $y$ and so is $x$.

Next assume $y=0$. We claim that $x=0$. Otherwise we have
\begin{equation}\label{4.1}
x^{2(q-1)}+bx^{q-1}+b^{1-q}=0,
\end{equation}
i.e.,
\[
\Bigl(\frac{x^{q-1}}b\Bigr)^2+\frac{x^{q-1}}b+b^{-1-q}=0.
\]
Since $\text{Tr}_{q/2}(b^{-1-q})=0$, we have $\frac{x^{q-1}}b\in\Bbb F_q$. Then $(\frac{x^{q-1}}b)^{q-1}=1$, which gives $x^{2(q-1)}=b^{1-q}$. Then \eqref{4.1} implies $bx^{1-q}=0$, which is a contradiction.

\subsection{The case $\boldsymbol{ab(a-b^{1-q})\ne 0}$, sufficiency}\

Assume that $ab(a-b^{1-q})\ne 0$, $\frac a{b^2}\in\Bbb F_q$, $\text{Tr}_{q/2}(\frac a{b^2})=0$, and $b^2+a^2b^{q-1}+a=0$. We show that $f$ is a PP of $\Bbb F_{q^2}$.

\medskip
$1^\circ$ We claim that $a+b+1\ne0$. Otherwise, we have
\[
\Bigl(\frac 1b\Bigr)^2+\frac 1b+\frac a{b^2}=0.
\]
Since $\frac a{b^2}\in\Bbb F_q$ and $\text{Tr}_{q/2}(\frac a{b^2})=0$, we have $\frac 1b\in\Bbb F_q$. Then $b^2+a^2b^{q-1}+a=0$ gives $a=1=b^{1-q}$, which is a contradiction. 

\medskip
$2^\circ$ We may assume $b\in\Bbb F_{q^2}\setminus\Bbb F_q$. (If $b\in\Bbb F_q$, then $a\in\Bbb F_q$ and we are done by \cite[Theorem~B]{Hou-a}.) Write
\[
b^2=c+db,
\]
where $d=b+b^q\in\Bbb F_q$ and $c=b^{q+1}\in\Bbb F_q$, and put $e=\frac a{b^2}\in\Bbb F_q$. Then $b^2+a^2b^{q-1}+a=0$ gives
\begin{equation}\label{4.2}
c=e^{-1}+e^{-2}.
\end{equation}

\medskip
$3^\circ$ We claim that for $z\in\Bbb F_{q^2}$, $f(z)\in (a+b+1)\Bbb F_q$ implies $z\in\Bbb F_q$. Assume to the contrary that there exists $z\in\Bbb F_{q^2}\setminus\Bbb F_q$ such that $f(z)=w\in (a+b+1)\Bbb F_q$. Write $z=x+yb$ and $w=u+vb$, where $x,y,u,v\in\Bbb F_q$, $y\ne 0$. Then by \eqref{3.18.1}, which also holds for even $q$, we have 
\begin{equation}\label{4.3}
(de+1)u=(ce+1)v,
\end{equation}
which by \eqref{4.2} can be written as
\[
v=e(de+1)u.
\]

First assume $u\ne 0$. The relation \eqref{4.3} allows us to eliminate the $x^2$ and $x$ terms in \eqref{3.16}; the result is
\begin{equation}\label{4.4}
v(c^2e+cd^2e+c+d^2)y^2+udxy+u(d^3e+c+d)y^2=(cv^2+u^2+duv)y.
\end{equation}
Use the relations $v=e(de+1)u$ and $c=e^{-1}+e^{-2}$ in \eqref{4.4} and cancel $uy$. We have
\begin{equation}\label{4.5}
x=(d+e^{-1})y+d^{-1}e(1+d+d^2e^2)u.
\end{equation}
Making the substitution \eqref{4.5} in the first equation of \eqref{3.16} gives
\begin{equation}\label{4.6}
y^2+e^2(de+1)uy+\frac{e^3u^2}{d^2}\bigl[d(de+1)^2+(de+1)^4\bigr]=0.
\end{equation}
However,
\begin{align*}
&\text{Tr}_{q/2}\Bigl[\frac{e^3u^2[d(de+1)^2+(de+1)^4]}{d^2[e^2(de+1)u]^2}\Bigr]\cr
=\,&\text{Tr}_{q/2}\Bigl[\frac{d+d^2e^2+1}{d^2e}\Bigr]= \text{Tr}_{q/2}\Bigl[e+\frac 1{de}+\frac 1{d^2e}\Bigr]\cr
=\,&\text{Tr}_{q/2}\Bigl[\frac 1{d^2e^2}+\frac 1{d^2e}\Bigr]=\text{Tr}_{q/2}\Bigl(\frac{e^{-2}+e^{-1}}{d^2}\Bigr)=\text{Tr}_{q/2}\Bigl(\frac c{d^2}\Bigr)\cr
=\,&1 \kern 1cm\text{(since $b\notin\Bbb F_q$)}.
\end{align*}
Thus by \eqref{4.6}, $y\notin\Bbb F_q$, which is a contradiction.

Next assume $u=0$. We have
\begin{equation}\label{4.7}
a+bz^{q-1}+z^{2(q-1)}=0,
\end{equation}
i.e.,
\[
\Bigl(\frac{z^{q-1}}b\Bigr)^2+\frac{z^{q-1}}b+\frac a{b^2}=0.
\]
Since $\text{Tr}_{q/2}(\frac a{b^2})=0$, we have $\frac{z^{q-1}}b\in\Bbb F_q$. Then $(\frac{z^{q-1}}b)^{q-1}=1$, which gives $z^{2(1-q)}=b^{q-1}$. By \eqref{4.7}, $a^2+b^2z^{2(q-1)}+z^{4(q-1)}=0$, i.e.,
\[
a^2z^{4(1-q)}+b^2z^{2(1-q)}+1=0.
\]
Since $z^{2(1-q)}=b^{q-1}=\frac{b^2+a}{a^2}$, we have 
\[
a^2\Bigl(\frac{b^2+a}{a^2}\Bigr)^2+b^2\cdot\frac{b^2+a}{a^2}+1=0,
\] which gives $\frac{b^2}a=0$, a contradiction.

\medskip
$4^\circ$ Let $z,w\in\Bbb F_{q^2}$ satisfy $f(z)=w$. We show that $z$ is uniquely determined by $w$. Write $z=x+yb$ and $w=u+vb$, where $x,y,u,v\in\Bbb F_q$ satisfy \eqref{3.16}.

If $u(de+1)=v(ce+1)$, by \eqref{3.18.1}, $w\in(a+b+1)\Bbb F_q$. By $3^\circ$, $z\in\Bbb F_q$, and hence $z=w/(a+b+1)$, which is unique. Therefore we assume $u(de+1)\ne v(ce+1)$. It follows from \eqref{3.18.1} that $w\notin (a+b+1)\Bbb F_q$ and hence $z\notin\Bbb F_q$. The polynomial $g$ in \eqref{3.18} is cubic. It suffices to show that $g$ has at most one root in $\Bbb F_q$. (See the explanation following \eqref{3.18}.)

\medskip
{\bf Case 1.} Assume to the contrary that $g$ has three distinct roots in $\Bbb F_q$. Let $\Bbb Q_2$ denote the field of $2$-adic numbers. Let $F/\Bbb Q_2$ be the unramified extension with residue field $\Bbb F_q$ and let $\frak o$ be the ring of integers of $F$. Lift $d,e,u,v\in\Bbb F_q$ to $\widetilde d, \widetilde e, \widetilde u, \widetilde v\in\frak o$ and put $\widetilde c=3\widetilde e\,^{-1}-\widetilde e\,^{-2}$. Then $\widetilde c\in\frak o$ since $e\ne 0$.
 Let $\widetilde g\in\frak o[\x]$ be the resulting lift of $g$. By Hensel's lemma \cite[Theorem~4.4.2]{Koc}, $\widetilde g$ splits with distinct roots in $\frak o$. Thus the discriminant $D(\widetilde g)$ of $\widetilde g$ is a square of $\frak o\setminus\{0\}$.  The calculation of \eqref{3.19}, which does not depend on the field, still holds for $D(\widetilde g)$. Thus we have
\begin{equation}\label{4.8}
D(\widetilde g)=\frac{(4\widetilde c+\widetilde d\,^2)(1-4\widetilde e)}{(\widetilde u+\widetilde d\,\widetilde e\,\widetilde u-\widetilde v-\widetilde c\,\widetilde e\,\widetilde v)^4\widetilde e\,^8}\widetilde\Theta^2,
\end{equation}
where $\widetilde\Theta$ is given by \eqref{3.21.0} with $d,e,u,v$ replaced by $\widetilde d, \widetilde e, \widetilde u, \widetilde v$, respectively. We claim that $1-4\widetilde e$ is a square of $\frak o\setminus\{0\}$. Since $\x^2+\x+e\in\Bbb F_q[\x]$ is reducible with distinct roots, $\x^2+\x+\widetilde e\in\frak o[\x]$ is reducible with distinct roots in $\frak o$ (Hensel's lemma). Hence $1-4\widetilde e$ is a square of $\frak o\setminus\{0\}$. It follows from \eqref{4.8} that $4\widetilde c+\widetilde d^2$ is a square of $\frak o\setminus\{0\}$. Therefore $\x^2-\widetilde d\x-\widetilde c\in F[\x]$ is reducible; its roots are in $\frak o$ since $\frak o$ is integrally closed in $F$. Thus $\x^2-d\x-c\in\Bbb F_q[\x]$ is reducible, which implies $b\in\Bbb F_q$, a contradiction.

\medskip
{\bf Case 2.} Assume that $g$ has a multiple root in $\overline{\Bbb F}_q$. Then $D(g)=0$, which implies $\Theta=0$ by \eqref{3.19}. We have $\deg\,g'=2$ and by \eqref{3.20}, $D(g')=0$. Therefore $g=\epsilon(\x-r)^3$ for some $\epsilon\in\Bbb F_q^*$ and $r\in \overline{\Bbb F}_q$. So $g$ has at most one root in $\Bbb F_q$.  

\subsection{The case $\boldsymbol{ab(a-b^{1-q})\ne 0}$, necessity}\

\begin{lem}\label{L4.1}
Let $q$ be any prime power. Let ${\tt z}$ be a transcendental over $\Bbb F_p$ ($p=\text{\rm char}\,\Bbb F_q$) and write $\x^2+\x-{\tt z}=(\x-r_1)(\x-r_2)$, $r_1,r_2\in\overline{\Bbb F_p({\tt z})}$. Then we have 
\[
\sum_{0\le l\le\frac q2}\binom{-l}l{\tt z}^l=-\frac{r_1^{q+1}-r_2^{q+1}}{r_1-r_2}.
\]
\end{lem}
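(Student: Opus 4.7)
The plan is to identify the right-hand side with a Lucas-type polynomial and then expand it explicitly modulo $p$. By Vieta's formulas applied to $\x^2+\x-{\tt z}$, we have $r_1+r_2=-1$ and $r_1r_2=-{\tt z}$, so setting $U_n:=(r_1^n-r_2^n)/(r_1-r_2)$, the sequence $(U_n)$ satisfies $U_0=0$, $U_1=1$, and the recurrence $U_n=-U_{n-1}+{\tt z}\,U_{n-2}$. In particular $U_n\in\Bbb Z[{\tt z}]$ (whose reduction lies in $\Bbb F_p[{\tt z}]$), and the right-hand side of the lemma is exactly $-U_{q+1}$.

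A routine induction on $n$, using Pascal's identity $\binom{n-2-k}{k}+\binom{n-2-k}{k-1}=\binom{n-1-k}{k}$, will then establish the closed form
\[
U_n=(-1)^{n-1}\sum_{k\ge 0}\binom{n-1-k}{k}{\tt z}^k\qquad\text{in }\Bbb Z[{\tt z}],
\]
the sum being finite because the binomial vanishes integrally once $k>(n-1)/2$. Specializing to $n=q+1$ gives $U_{q+1}=(-1)^q\sum_{0\le k\le q/2}\binom{q-k}{k}{\tt z}^k$ in $\Bbb F_p[{\tt z}]$.

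It remains to reduce modulo $p$. Since $q$ is a power of $p$, $q\equiv 0\pmod p$, so each of the $k$ factors in the numerator of $\binom{q-k}{k}$ satisfies $q-k-i\equiv -(k+i)\pmod p$, whence
\[
\binom{q-k}{k}\equiv\binom{-k}{k}\pmod p.
\]
Combined with $(-1)^q=-1$ in $\Bbb F_p$ (trivially so when $p=2$), this yields
\[
-U_{q+1}=\sum_{0\le l\le q/2}\binom{-l}{l}{\tt z}^l,
\]
which is the claim. There is no real obstacle: the induction is standard, the mod-$p$ reduction of each binomial is immediate from $q\equiv 0\pmod p$, and the index ranges match automatically since $\binom{q-k}{k}$ already vanishes in $\Bbb Z$ for $k>q/2$. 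The only conceptual step is spotting that the problem is one about Lucas polynomials in disguise.
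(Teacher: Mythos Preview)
Your argument is correct and takes a genuinely different route from the paper. The paper expands $1/((\x-r_1)(\x-r_2))$ by partial fractions and extracts the constant term of the Laurent series $-{\tt z}^{q+1}\x^{-q}/((\x-r_1)(\x-r_2))$, leaning on a computation from an earlier paper. You instead recognize $(r_1^{q+1}-r_2^{q+1})/(r_1-r_2)$ as a Lucas-type sequence term, prove the classical closed form $U_n=(-1)^{n-1}\sum_{0\le k\le (n-1)/2}\binom{n-1-k}{k}{\tt z}^k$ from the recurrence, and then reduce modulo $p$. Your approach is more self-contained and makes the role of the characteristic transparent: the identity over $\Bbb Z[{\tt z}]$ already holds with $\binom{q-k}{k}$, and only the final passage to $\binom{-k}{k}$ uses $p\mid q$. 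Two small caveats: your remark that $\binom{n-1-k}{k}$ vanishes for all $k>(n-1)/2$ is not literally true once $k\ge n$ (the upper index turns negative, e.g.\ $\binom{-1}{3}=-1$), but the induction itself already produces the finite sum $0\le k\le\lfloor(n-1)/2\rfloor$, so this is harmless; and inferring $\binom{q-k}{k}\equiv\binom{-k}{k}\pmod p$ directly from factor-by-factor congruence of numerators glosses over division by $k!$ when $p\mid k!$ --- a clean fix is the Vandermonde expansion $\binom{q-k}{k}=\sum_j\binom{q}{j}\binom{-k}{k-j}$ together with $\binom{q}{j}\equiv 0\pmod p$ for $0<j<q$.
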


\begin{proof}We denote the constant term of a Laurent series in $\x$ by $\text{ct}(\ )$. By the computation in the proof of \cite[Lemma~5.1]{Hou-b}, we have
\begin{align*}
\sum_{0\le l\le\frac q2}\binom{-l}l{\tt z}^l\,&=\text{ct}\Bigl[\frac{-{\tt z}^{q+1}}{\x^q}\frac 1{(\x-r_1)(\x-r_2)}\Bigr]\cr
&=\text{ct}\Bigl[\frac{{\tt z}^{q+1}}{\x^q}\frac 1{r_1-r_2}\Bigl(\frac 1{r_1}\,\frac 1{1-\frac{\x}{r_1}}-\frac 1{r_2}\,\frac 1{1-\frac{\x}{r_2}}\Bigr)\Bigr]\cr
&=\frac{{\tt z}^{q+1}}{r_1-r_2}\Bigl(\frac 1{r_1^{q+1}}-\frac 1{r_2^{q+1}}\Bigr)\cr
&=\frac{-{\tt z}^{q+1}}{(r_1r_2)^{q+1}}\cdot\frac{r_1^{q+1}-r_2^{q+1}}{r_1-r_2}\cr
&=-\frac{r_1^{q+1}-r_2^{q+1}}{r_1-r_2}.
\end{align*}
\end{proof}

Assume that $ab(a-b^{1-q})\ne 0$ and that $f$ is a PP of $\Bbb F_{q^2}$. We show that $\frac a{b^2}\in\Bbb F_q$, $\text{Tr}_{q/2}(\frac a{b^2})=0$ and $b^2+a^2b^{q-1}+a=0$. 

Let $z=\frac{a^q}{b^{2q}}$. By \eqref{3.4.1} we have
\begin{equation}\label{4.9}
\sum_{0\le l\le\frac q2}\binom{-l}lz^l=1.
\end{equation}
Write $\x^2+\x+z=(\x+r_1)(\x+r_2)$, $r_1,r_2\in\overline{\Bbb F}_q$. It follows from \eqref{4.9} and Lemma~\ref{L4.1} that 
\[
\frac{r_1^{q+1}+r_2^{q+1}}{r_1+r_2}=1.
\]
Since $r_1+r_2=1$, we have
\[
1=r_1^{q+1}+r_2^{q+1}=r_1^{q+1}+(r_1+1)^{q+1}=r_1+r_1^q+1.
\]
So $r_1+r_1^q=0$, i.e., $r_1\in\Bbb F_q$. It follows that $r_2\in\Bbb F_q$ and $z\in\Bbb F_q$. Then $z=\frac{a^q}{b^{2q}}=\frac a{b^2}$. Since $\x^2+\x+z\in\Bbb F_q[\x]$ is reducible, we have $\text{Tr}_{q/2}(z)=0$.

By \eqref{3.24}, we have
\[
(b^2+a^2b^{q-1}+a)(1+a^2b^{2q-2}+b^{q+1})=0.
\]
It remains to show that $1+a^2b^{2q-2}+b^{q+1}\ne 0$. Assume to the contrary that $1+a^2b^{2q-2}+b^{q+1}=0$. Then we have
\[
1+ab^{q-1}+b^{\frac{q+1}2}=0,
\]
i.e.,
\[
a+b\cdot b^{\frac 12(1-q)}+b^{1-q}=0.
\]
(Note that $b^{\frac 12}\in\Bbb F_{q^2}$ is well defined since $q$ is even.) Then $f(b^{-\frac 12})=0$, which is a contradiction.

The proof of Theorem~B is now complete.  


\end{document}